\documentclass[reqno,12pt]{amsart}

\usepackage{amsmath, amsthm, amssymb, enumerate, amscd}
\usepackage[mathscr]{eucal}
\usepackage{pgf,tikz}
\usepackage{tikz, tikz-cd}
\usetikzlibrary{matrix,decorations.pathmorphing,arrows}

\theoremstyle{plain}
\newtheorem{theorem}{Theorem}[section]
\newtheorem{proposition}[theorem]{Proposition}
\newtheorem{corollary}[theorem]{Corollary}

\theoremstyle{definition}

\newtheorem{remark}[theorem]{Remark}

\newtheorem{examples}[theorem]{Examples}

\newcommand{\N}{{\mathbb{N}}}

\begin{document}

\title[Decomposition of an element of a numerical semigroup]{The number of addends in the decomposition of an element of a numerical semigroup into atoms}

\thanks{}
\author{Hamid Kulosman}
\address{Department of Mathematics\\ 
University of Louisville\\
Louisville, KY 40292, USA}
\email{hamid.kulosman@louisville.edu}

\subjclass[2010]{Primary 20M14, 20M13; Secondary 20M25, 13A05, 13F15}

\keywords{Numerical semigroup, atoms, irreducible elements, factorization process, addendization}

\date{}

\begin{abstract} 
We prove that for every nonempty set $\Sigma$ of integers bigger than $1$, which has at most three elements, there exists a numerical semigroup $T$ and an element $x$ of $T$ such that a natural number $n$ is the number of atoms in a decomposition of $x$ into atoms if and only if $n$ belongs to $\Sigma$. We also propose three related conjectures.
\end{abstract}

\maketitle

\section{Introduction}\label{intro}
Let's start with a description of a factorization process of a nonzero nonunit element $x$ of an integral domain $R$. If $x$ is an atom, we stop. If not, we decompose it as $x=x_0x_1$, where both $x_0$ and $x_1$ are nonzero nonunits. If both $x_0, x_1$ are atoms, we stop. If not, we take the first from the left of the elements $x_0, x_1$ which is reducible and decompose it as a product of two nonzero nonunits. Say $x_0$ is reducible. We decompose it: $x_0=x_{0,0}x_{0,1}$. Now we have $x= x_{0,0}x_{0,1}x_1$. If all of the $x_{0,0}$, $x_{0,1}$, $x_1$ are atoms, we stop. If not, we take the first from the left of the elements  $x_{0,0}$, $x_{0,1}$, $x_1$ which is reducible and decompose it as a product of two nonzero nonunits. Say $x_{0,0}$ is an atom and $x_{0,1}$ is reducible: $x_{0,1}=x_{0,1,0}x_{0,1,1}$. Now we have $x=x_{0,0}x_{0,1,0}x_{0,1,1}x_1$, etc. We call this process a {\it factorization process} of $x$. If it stops after finitely many steps, we say that this is a {\it finite factorization process} of $x$. If it never stops, we say that this is an {\it infinite factorization process} of $x$. Obviously, this process depends only on the multiplication in $R$, so it can be considered in a multiplicative monoid and, hence, an analogue can be considered in an additive monoid $(T,+)$. There we call it an {\it addendization process} as we decompose $x$ into a sum of addends. We have {\it finite} and {\it infinite addendization processes}. Addendization processes of additive monoids shed some light on the structure of monoids, but also give some insights about the structure of integral domains since to each additive monoid $T$ we can associate a monoid domain $F[X;M]$, where $F$ is a field. So it is of interest to investigate them in detail.

Consider now a special case when $(T,+)$ is a numerical semigroup. Then there are only finitely many atoms in $T$ and also every addendization process is finite. For a given nonzero nonunit element $x\in T$ we are interested in a number of atoms that we obtain after the last step of an addendization process. We call the set of all such numbers that we get for a given element $x\in T$ the {\it addendization set} of $x$ and we denote it by $AS(x)$.

\begin{examples}\label{ex_addendization_numbers}
(1) If $T=\N=\{0,1,2,3,\dots\}$, the only atom is $1$ and we have $AS(x)=\{x\}$ for every $x\in T\setminus \{0\}$.

(2) If $T=\langle 2,3\rangle=\{0,2,3,4,5,6,\dots\}$, the only atoms are $2$ and $3$, and we have:
\begin{align*}
AS(2) &= \{1\},\\
AS(3) &= \{1\},\\
AS(4) &= \{2\},\\
AS(5) &= \{2\},\\
AS(6) &= \{2,3\},\\
AS(7) &= \{3\},\\
AS(8) &= \{3,4\},\\
AS(9) &= \{3,4\},\\
AS(10) &= \{4,5\}.
\end{align*}

(3) If $T=\langle 3,7,8\rangle=\{3,6,7,8,9,10,\dots\}$, the only atoms are $3,7$ and $8$. The smallest $x\in T$ such that $AS(x)$ has one element is of course $x=3$: $AS(3)=\{1\}$. The smallest $x\in T$ such that $AS(x)$ has two elements is $x=14$: $AS(14)=\{2,3\}$. The smallest $x\in T$ such that $AS(x)$ has three elements is $x=21$: $AS(21)=\{3,4,7\}$.
\end{examples}

Note that for an element $x$ of a numerical semigroup $T$, $AS(x)=\{1\}$ if and only if $x$ is an atom. Also, if a subset $\Sigma$ of $\N^\ast=\{1,2,3,\dots\}$ contains $1$ and at least one more element, then there is no $x\in T$ such that $AS(x)=\Sigma$. The \underbar{question} we are interested in is the following one: {\it given a subset $\Sigma$ of the set $\N\setminus \{0,1\}$, does there necessarily exist a numerical semigroup $T$ and an element $x\in T$ such that $AS(x)=\Sigma$}?

\medskip
After giving some notation and preliminaries in Section \ref{notation_prelims}, we will work on this question in Section \ref{addendization_sets}. 

\section{Notation and preliminaries}\label{notation_prelims}
We begin by recalling some definitions and statements. All the notions in this paper that we use  but not define, as well as the statements that we mention or use but don't prove, can be found in one of the books \cite{gil} by R.~Gilmer, \cite{g} by P.~A.~Grillet, and \cite{rgs} by J.~C.~Rosales and P.~A.~Garc\'ia-S\'anchez. The reader can also consult our paper \cite{gk} (joint with R.~Gipson), where the notion of a factorization process in an integral domain was introduced.

\medskip
We use $\subseteq$ to denote inclusion, $\subset$ to denote strict inclusion and $|\cdot|$ to denote the cardinality of a set. We also denote $\N=\{0,1,2,\dots\}$ and $\N^\ast=\{1,2,\dots\}$.

\medskip
An {\it integral domain} is a commutative ring $R\ne \{0\}$ (with multiplicative identity $1$) such that for any $x,y\in R$, $xy=0$ implies $x=0$ or $y=0$.  All the rings that we use in this paper are assumed to be commutative and with multiplicative identity $1$. In fact, all of them will be integral domains.
An element $a\in R$ is said to be a {\it unit} of $R$ if it has a multiplicative inverse in $R$. A non-zero non-unit element $a\in R$ is said to be {\it irreducible} (and called an {\it atom} of $R$) if  $a=bc$ $(b,c\in R$) implies that at least one of the elements $b,c$ is a unit. An integral domain $R$  is said to be {\it atomic} if every non-zero non-unit element of $R$ can be written as a finite product of atoms.

\medskip
Many notions related to factorization in an integral domain can be already defined in the underlying multiplicative monoid $(R,\cdot)$. Hence we can generalize them by defining them in an arbitrary commutative monoid with zero $(M,\cdot)$, and then also in an arbitrary additive monoid $(M,+)$ by just translating the terminology from the multiplicative one to the additive one. The next definitions illustrate this point of view.

\medskip
A {\it commutative monoid}, written additively, is a non\-emp\-ty set $M$ with an operation $+:M\times M\to M$ which is associative, commutative, and has an identity element called {\it zero} ( i.e., an element $0\in M$ such that $a+0=a$ for every $a\in M$. All the monoids used in this paper are assumed to be commutative and written additively. From now on we call them just {\it monoids}. An element $a\in M$ is said to be a {\it unit} of $M$ if it has an additive inverse in $M$. A non-unit element $a\in M$ is said to be {\it irreducible} (and called an {\it atom} of $M$) if  $a=b+c$ $(b,c\in M)$ implies that at least one of the elements $b,c$ is a unit. A monoid $M$  is said to be {\it atomic} if every non-infinity non-unit element of $M$ can be written as a finite sum of atoms.

\medskip
A {\it numerical semigroup} is a submonoid $T$ of $\N$ with finite complement in $\N$. It is finitely generated and has a unique {\it minimal system of generators} $A\subseteq \N$. The cardinality of the minimal system of generators is bounded from above by the least positive element of $T$. If $A$ is a nonempty subset of $\N$, then the submonoid $\langle A\rangle$ of $\N$, generated by $A$, is a numerical semigroup if and only if $\gcd(A)=1$. 

Note that the minimal generating set of a numerical semigroup $T$ consists precisely of all the atoms of $T$. Hence, when we say that $T=\langle n_1, n_2,\dots, n_k\rangle$ is a numerical semigroup, to prove that $A=\{ n_1, n_2,\dots, n_k\}$ is the minimal generating set of $T$ is equivalent to proving each of the elements $n_1, n_2, \dots, n_k$ is an atom of $T$.

\medskip
If $M$ is a  monoid and $F$ is a field, we will consider the {\it monoid ring} $F[X;M]$ associated with $M$. It consists of the polynomial expressions
\[f=a_0X^{\alpha_0}+a_1X^{\alpha_1}+\dots +a_nX^{\alpha_n},\]
where $n\ge 0$, \,$a_i\in F$, \,and $\alpha_i\in M$ $(i=0,1,\dots, n)$, with ``standard'' addition and multiplication. The factorization properties of $F[X;M]$ and the addendization properties of $M$ can be expressed via each other.

\section{Addendization sets}\label{addendization_sets}
In this section we will be discussing the question raised at the end of Section 1. We start with the following simple statement.

\begin{proposition}\label{Sigma_singleton}
For any singleton $\Sigma=\{n\}\subseteq \N\setminus\{0,1\}$ there is a numerical semigroup $T$ and an element $x\in T$ such that $AS(x)=\Sigma$. 
\end{proposition}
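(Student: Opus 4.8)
The plan is to reduce the statement to a question about lengths of factorizations and then to exhibit one explicit small example. First I would record the (essentially definitional) observation that for every $x$ in a numerical semigroup, $AS(x)$ coincides with the set of lengths of factorizations of $x$ into atoms. Indeed, every addendization process terminates in a sum of atoms, so its final atom-count is the length of some factorization; conversely, given any factorization $x=a_1+a_2+\dots+a_\ell$ into atoms, one realizes it by the process by repeatedly splitting off the leftmost atom, i.e.\ $x=a_1+(a_2+\dots+a_\ell)$, then $a_2+\dots+a_\ell=a_2+(a_3+\dots+a_\ell)$, and so on, the remaining sum being reducible at each stage until only atoms remain. Hence it suffices to produce a numerical semigroup $T$ and an element $x\in T$ all of whose factorizations into atoms have exactly $n$ addends.

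For the construction I would take the two consecutive generators $T=\langle n,\,n+1\rangle$, which is a numerical semigroup since $\gcd(n,n+1)=1$, with atoms $n$ and $n+1$ (both at least $2$, as $n\ge 2$), and set $x=n^2$. Any factorization of $x$ uses, say, $a$ copies of $n$ and $b$ copies of $n+1$, so that $an+b(n+1)=n^2$ and its length is $\ell=a+b$.

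The crux is to show that $\ell=n$ is forced, and here I would bound $\ell$ from both sides, exploiting that the two atoms are as close together as possible. Since the smallest atom is $n$, we get $\ell n\le an+b(n+1)=n^2$, hence $\ell\le n$. Since the largest atom is $n+1$, we get $n^2=an+b(n+1)\le \ell(n+1)$, hence $\ell\ge n^2/(n+1)=n-\tfrac{n}{n+1}>n-1$, so $\ell\ge n$. Therefore $\ell=n$; and since $x=n\cdot n$ is a genuine factorization of length $n$, the length set is nonempty and equals $\{n\}$. By the reduction in the first paragraph, $AS(x)=\{n\}=\Sigma$, which completes the argument.

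The main obstacle I anticipate is pinning the length down to a single value. For generic generators one only gets an interval of attainable lengths, roughly $[\lceil x/a_{\max}\rceil,\ \lfloor x/a_{\min}\rfloor]$, which collapses to a singleton only when the ratio $a_{\max}/a_{\min}$ is close enough to $1$. This is precisely why taking consecutive integers $n$ and $n+1$, together with $x=n^2$ (which lies strictly below the threshold $n(n+1)$ at which length $n+1$ would first appear and strictly above $(n-1)(n+1)$ below which length $n-1$ would appear), is the natural choice; it is the step where one must verify the two inequalities rather than merely inspect the endpoints.
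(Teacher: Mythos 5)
Your proof is correct, but it uses a different witness and a more explicit verification than the paper. The paper's entire proof is the single line ``Take $T=\langle 2,2n-1\rangle$ and $x=2n$'' (where the only factorization is $n\cdot\underline{2}$, since any copy of $2n-1$ would have to appear an even number of times by parity and already $2(2n-1)>2n$); you instead take $T=\langle n,n+1\rangle$ and $x=n^2$ and pin the length down by sandwiching it between $x/(n+1)$ and $x/n$. Your route buys two things the paper leaves implicit: first, the reduction identifying $AS(x)$ with the set of factorization lengths (realizing any factorization $x=a_1+\dots+a_\ell$ by repeatedly splitting off the leftmost atom is exactly the right argument, and it is tacitly assumed throughout the paper); second, a verification that generalizes --- the observation that consecutive generators force the length interval $[\lceil x/a_{\max}\rceil,\lfloor x/a_{\min}\rfloor]$ to collapse is a reusable principle, whereas the paper's example relies on an ad hoc parity obstruction. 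The only cost is that your argument is longer than the one-line citation of an example; both are complete and correct, and they even coincide for $n=2$.
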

\begin{proof}
Take $T=\langle 2,2n-1\rangle$ and $x=2n$.
\end{proof}

Now we consider the case of a two-element subset $\Sigma$ of $\N\setminus \{0,1\}$.

\begin{proposition}\label{Sigma_two_elements}
Consider the subset $\Sigma=\{2,n\}$ of the set $\N\setminus \{0,1\}$, with $n\ge 3$. The monoid 
\[T=\langle k, k+n, kn-(k+n)\rangle,\]
where $k\ge 7$ and $\gcd(n,k)=1$, is a numerical semigroup with minimal generating set $A=\{k,k+n,kn-(k+n)\}$, whose element $x=kn$ has the property $AS(x)=\Sigma$. 
\end{proposition}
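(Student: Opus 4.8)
The plan is to reduce the problem to counting factorization lengths and then to solving a single linear Diophantine equation. First I would note that for an element $x$ of a numerical semigroup the set $AS(x)$ coincides with the set of all lengths $\alpha+\beta+\gamma$ of factorizations $x=\alpha k+\beta(k+n)+\gamma c$ of $x$ into atoms, where $c=kn-(k+n)$ and $\alpha,\beta,\gamma\in\N$. Indeed, every addendization process terminates in a factorization of $x$ into atoms, and conversely every such factorization is produced by some addendization process (split off one atom at a time, always choosing the leftmost reducible summand). Hence it suffices to determine the set of lengths of atomic factorizations of $x=kn$. Two of them are immediate from the identity
\[ n\cdot k \;=\; kn \;=\; (k+n)+\bigl(kn-(k+n)\bigr), \]
namely $kn=\underbrace{k+\cdots+k}_{n}$ of length $n$ and $kn=(k+n)+c$ of length $2$; this already shows $\{2,n\}\subseteq AS(kn)$.

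Before counting, I would verify that $T$ is a numerical semigroup with the asserted minimal generating set. Since $\gcd(k,k+n)=\gcd(k,n)=1$, the three generators have gcd $1$, so $\langle k,k+n,c\rangle$ has finite complement in $\N$ and is a numerical semigroup. To see that the generating set is minimal I would check that each of $k$, $k+n$, $c$ is an atom. A short size estimate using $k\ge 7$ and $n\ge 3$ gives $k<k+n<c$, so $k$ is the least positive element of $T$ and is therefore irreducible. For $k+n$ and $c$, a nontrivial atomic decomposition can involve neither $c$ (too large) nor a second copy of the element itself, so it would force $k+n$ to be a multiple of $k$, or $c$ to lie in $\langle k,k+n\rangle$; reducing modulo $k$ and using $\gcd(k,n)=1$ produces a congruence that makes the required coefficient too large to fit, ruling both out.

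The heart of the argument is to show that no length other than $2$ and $n$ occurs. I would start from
\[ \alpha k+\beta(k+n)+\gamma c=kn, \]
expand, and rewrite it as $k(\alpha+\beta-\gamma)+n(\beta-\gamma)=kn(1-\gamma)$, then split into cases on $\gamma$. For $\gamma=0$ the equation becomes $k(\alpha+\beta)+n\beta=kn$; reducing modulo $k$ and using $\gcd(n,k)=1$ forces $\beta=0$ and hence $\alpha=n$, the length-$n$ factorization. For $\gamma=1$ it becomes $k(\alpha+\beta-1)+n(\beta-1)=0$, and a sign-and-divisibility analysis forces $\beta=1$, $\alpha=0$, the length-$2$ factorization $kn=(k+n)+c$. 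Finally, for $\gamma\ge 2$ I would use that $\gamma c\le kn$, which yields $2(kn-k-n)\le kn$, i.e.\ $k(n-2)\le 2n$, i.e.\ $k\le 2+\tfrac{4}{n-2}\le 6$, contradicting $k\ge 7$. Thus only $\gamma\in\{0,1\}$ survive and $AS(kn)=\{2,n\}$.

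I expect the main obstacle to be the case analysis in the last paragraph, in particular pinning down exactly where the hypotheses are used: the coprimality $\gcd(n,k)=1$ is what collapses each of the cases $\gamma\in\{0,1\}$ to a single factorization, while the bound $k\ge 7$ is precisely what excludes $\gamma\ge 2$, since $2n/(n-2)\le 6$ for every $n\ge 3$. Keeping these divisibility and inequality steps tight—and observing that the same modulo-$k$ estimates simultaneously establish the minimality of the generating set—is the delicate part; once they are in place the conclusion follows.
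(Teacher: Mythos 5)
Your proposal is correct and follows essentially the same route as the paper: verify the three generators are atoms via size comparisons and reduction modulo $k$ (using $\gcd(n,k)=1$), then classify all solutions of $\alpha k+\beta(k+n)+\gamma\bigl(kn-(k+n)\bigr)=kn$ by cases $\gamma=0$, $\gamma=1$, $\gamma\ge 2$, with the bound $2n/(n-2)\le 6<k$ eliminating $\gamma\ge 2$ exactly as in the paper. The only (harmless) difference is that you make explicit the identification of $AS(x)$ with the set of lengths of atomic factorizations, which the paper uses implicitly.
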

\begin{proof}
We first show that each of the elements $k, kn, kn-(k+n)$ is an atom. Clearly $k<k+n$, while the inequality 
\begin{equation}\label{eq_order_atoms}
k+n<kn-(k+n)
\end{equation}
follows from the inequalities $\displaystyle{{2n \over n-2} \le 6<k}$, which are true since $n\ge 3$ and $k\ge 7$. Now $k$ is obviously an atom as the smallest element of a generating set. Also $k+n$ is an atom since $k+n\ne \lambda k$ (for $\lambda\in\N$) as $n \not\equiv 0\pmod k$. Suppose that $kn-(k+n)$ is not an atom. Then 
\begin{equation}\label{eq_non_atom}
kn-(k+n) =\lambda k+\mu (k+n) \quad (\lambda, \mu\in\N).
\end{equation}
This equality (when considered modulo $k$) implies $\mu+1\equiv 0 \pmod k$. Hence
\begin{equation}\label{eq_mu}
\mu\in \{k-1, 2k-1, 3k-1,\dots\}.
\end{equation}
The equality (\ref{eq_non_atom}) can be written as 
\begin{equation}\label{eq_mu_1}
kn=\lambda k+\mu (k+n) +k+n.
\end{equation}
One of the addends on the right hand side (RHS) is $\mu n$, hence  which because of (\ref{eq_mu}) implies $\mu=k-1$. If we plug in this into (\ref{eq_mu_1}) we get $0=\lambda+k$, a contradiction. Thus $kn-(k+n)$ is also an atom.

Now we analyze in which ways $x=kn$ can be decomposed into a sum of atoms. One way is $x=\underline{k+n}+\underline{kn-(k+n)}$, where the underlined expressions are atoms. To find all other ways we write
\begin{equation}\label{kn_decompositions}
\alpha k+\mu(k+n)+\gamma[kn-(k+n)]=k+n+[kn-(k+n)].
\end{equation} 
If here $\beta\ge 2$, then the left hand side (LHS) is bigger than the right hand side (RHS) due to (\ref{eq_order_atoms}). Hence $\gamma=0$ or $1$. If $\gamma=1$, we have $\alpha k+\beta (k+n)=k+n$. Hence $\alpha=0$ and $\beta=1$  (however we already mentioned this way to represent $kn$ as a sum of atoms), or $\beta=0$, in which case we get $k+n=\lambda k$, but that is not possible since $k+n$ is an atom. Finally if $\gamma=0$, we get from (\ref{kn_decompositions}) the equality 
\begin{equation*}
\alpha k+\beta (k+n)=k+n+[kn-(k+n)].
\end{equation*}
Here $\beta=0$ (otherwise $\alpha k+(\beta-1)(k+n)=kn-(k+n)$, which is not possible since $kn-(k+n)$ is an atom). Hence $\alpha k=k+n+[kn-(k+n)]$ and so $\alpha=n$. Thus we got one more way to decompose $x=kn$ as a sum of atoms. 

In total we got the following two ways to decompose $x$ as a sum of atoms:
\begin{align*}
x &= \underline{k+n}+\underline{kn-(k+n)},\\
   &= n\cdot \underline{k},
\end{align*}
where the underlined elements are atoms, so that indeed $AS(x)=\{2,n\}$.
\end{proof}

Next we consider the case of a three-element subset $\Sigma$ of $\N\setminus \{0,1\}$.

\begin{proposition}\label{Sigma_three_elements}
Consider the subset $\Sigma=\{2,n,t\}$ of the set $\N\setminus \{0,1\}$, with $n\ge 3$ and $t\ge n+1$. The monoid 
\[T=\langle tn^2,\, tn^2+n,\, t^2n+1,\, t^2n+n+1,\, t^2n^2-t^2n-1\rangle\]
is a numerical semigroup with minimal generating set $A=\{tn^2, \,tn^2+n,\, t^2n+1,\, t^2n+n+1, \,t^2n^2-t^2n-1\}$, whose element $x=t^2n^2+n$ has the property $AS(x)=\Sigma$. 
\end{proposition}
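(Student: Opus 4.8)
The plan is to follow the template of Proposition~\ref{Sigma_two_elements}: first establish that $T$ is a numerical semigroup with the claimed minimal generating set by checking that the gcd of the generators is $1$ and that each listed generator is an atom, and then determine every way of writing $x$ as a sum of atoms. Write $a_1=tn^2$, $a_2=tn^2+n$, $a_3=t^2n+1$, $a_4=t^2n+n+1$, $a_5=t^2n^2-t^2n-1$; the hypotheses $n\ge 3$ and $t\ge n+1$ give the ordering $a_1<a_2<a_3<a_4<a_5$. For the numerical semigroup claim it suffices to note that any prime dividing $a_1=tn^2$ divides $t$ or $n$ and hence leaves $a_3=t^2n+1\equiv 1$, so that $\gcd(a_1,a_3)=1$. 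A convenient bookkeeping device for the rest is the list of residues modulo $n$, namely $a_1,\dots,a_5\equiv 0,0,1,1,-1$, and the fact that every nonzero element of $T$ is $\ge a_1=tn^2$, which is much larger than $n$.

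The atom verifications split by size and by residue mod $n$. Since every nonzero element of $T$ is $\ge a_1=tn^2>n$, the element $a_2=a_1+n$ cannot be a sum of two atoms, so it is an atom. For $a_3$ and $a_4$, a hypothetical decomposition into $\ge 2$ atoms can only use strictly smaller generators, and reducing modulo $n$ (together with the observation that the only positive values of $c_1a_1+c_2a_2$ are $\ge tn^2$, far exceeding $a_4-a_3=n$) forces a contradiction. The largest generator $a_5$ is the delicate one: from $c_1a_1+c_2a_2+c_3a_3+c_4a_4=a_5$ the estimate $(c_3+c_4)(t^2n+1)\le a_5<t^2n^2$ gives $c_3+c_4\le n-1$, while reduction mod $n$ forces $c_3+c_4\equiv n-1$, so $c_3+c_4=n-1$; substituting this back collapses the equation to $(c_1+c_2)tn^2+c_2n+c_4n=-n$, which is impossible. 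Hence all five generators are atoms.

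For the addendization set I would first observe that in a numerical semigroup every factorization of $x$ into atoms is reachable by splitting off one atom at a time, so that $AS(x)$ is exactly the set of lengths of such factorizations; it then remains to show this set equals $\{2,n,t\}$ (three distinct values, since $2<n<t$). Three factorizations realize these lengths: $x=a_4+a_5$, $x=n\,a_3$, and $x=(t-1)a_1+a_2$, each verified by a direct computation. To rule out all other lengths, consider an arbitrary factorization $x=\sum_i c_ia_i$. Because $2a_5>x$ one has $c_5\in\{0,1\}$. If $c_5=1$, the remainder equals $x-a_5=a_4$, and since $a_4$ is an atom this forces $c_4=1$ with the rest zero, giving length $2$. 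If $c_5=0$, then $(c_3+c_4)(t^2n+1)\le x=n(t^2n+1)$ together with reduction mod $n$ gives $c_3+c_4\in\{0,n\}$: the value $c_3+c_4=n$ forces, after writing $c_3a_3+c_4a_4=x+c_4n$, that $c_4=0$, $c_3=n$, $c_1=c_2=0$, i.e. $x=n\,a_3$ of length $n$; and $c_3+c_4=0$ reduces to $c_1a_1+c_2a_2=x$, where dividing by $n$ and reading modulo $tn$ pins down $c_2=1$, $c_1=t-1$, of length $t$.

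The main obstacle I expect is organizing this case analysis so that the size bounds and the congruences mod $n$ (and mod $tn$ in the final step) are simultaneously tight enough to leave no room for spurious coefficients. In particular the inequalities $(c_3+c_4)(t^2n+1)\le x$ and $2a_5>x$ must be checked to hold for all admissible $n\ge 3$ and $t\ge n+1$, and the atom proof for $a_5$ depends on the exact cancellation above rather than on a crude estimate; getting these constants right is where the real work lies.
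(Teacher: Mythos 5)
Your proposal is correct and follows essentially the same strategy as the paper's proof: the same three decompositions $x=a_4+a_5=n\,a_3=(t-1)a_1+a_2$, the same bookkeeping via residues $0,0,1,1,-1$ modulo $n$ combined with size estimates to verify the atoms and to exclude further factorizations (your exact collapse to $(c_1+c_2)tn^2+c_2n+c_4n=-n$ and the split $c_3+c_4\in\{0,n\}$ are just tighter versions of the paper's inequalities). The only organizational difference is that the paper delegates the subcases with $\alpha=\beta=0$ and part of the atomicity of $t^2n^2-t^2n-1$ to Proposition~\ref{Sigma_two_elements} applied to the sub-semigroup $T'=\langle t^2n+1,\,t^2n+n+1,\,t^2n^2-t^2n-1\rangle$ with $k=t^2n+1$, whereas your argument is self-contained; you also make explicit the (correct, implicitly used) identification of $AS(x)$ with the set of lengths of factorizations into atoms.
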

\begin{proof}
Let $k=t^2n+1$. Then all the hypothesis of Proposition \ref{Sigma_two_elements} are satisfied, namely:
\begin{align*}
n &\ge 3,\\
\gcd(k,n) = \gcd(t^2n&+1, n) = \gcd(k,n)= 1,\\
k=t^2n+1 \ge &\,4^2\cdot 3+1 = 49\ge 7.
\end{align*}
Consider the submonoid 
\[T'=\langle t^2n+1,\, t^2n+n+1,\, t^2n^2-t^2n-1\rangle\]
of $\N$. By Proposition \ref{Sigma_two_elements}, $T'$ is a numerical semigroup with minimal generating set $A'=\{ t^2n+1,\, t^2n+n+1,\, t^2n^2-t^2n-1\}$, whose element $x=kn=t^2n^2+n$ has the property $AS(x)=\Sigma'=\{2,n\}$. We want to extend the generating set $A'$ of $T'$ by adding to it two elements $a,b$ so that the following conditions hold:

\smallskip
(1) $a<b<t^2n+1<t^2n+n+1<t^2n^2-t^2n-1$;

\smallskip
(2) there are $\lambda,\mu\in\N$ such that $\lambda+\mu=t$ and $\lambda a +\mu b=kn=t^2n^2+n$;

\smallskip
(3) the set $A=\{a, \,b,\, t^2n+1,\, t^2n+n+1, \,t^2n^2-t^2n-1\}$ is a minimal generating set of the submonoid $T=\langle tn^2,\, tn^2+n,\, t^2n+1,\, t^2n+n+1,\, t^2n^2-t^2n-1\rangle$ of $\N$, which is a numerical semigroup.

\smallskip
Once we find such $a$ and $b$, we hope that then, additionally, the element $x=t^2n^2+n$ satisfies $AS(x)=\Sigma$. 

\smallskip
In order to get an idea how to define $a$ and $b$ we note that
\begin{align*}
t^2n^2+n=\lambda a+\mu b &> (\lambda+\mu)a=ta,\\
t^2n^2+n=\lambda a+\mu b &< (\lambda+\mu)b=tb,
\end{align*}
from where we get
\begin{align*}
a &< (tn+{1 \over t})\,n,\\
b &> (tn+{1 \over t})\,n.
\end{align*}
This motivates us to try 
\begin{align*}
a &=tn^2,\\
b &=(tn+1)\,n.
\end{align*}
It turns out that this choice of $a$ and $b$ will do.

\smallskip
Now we have $T=\langle tn^2,\, tn^2+n,\, t^2n+1,\, t^2n+n+1,\, t^2n^2-t^2n-1\rangle$. Note that 
\[tn^2+n<t^2n+1 \Leftrightarrow n<tn(t-n)+1,\]
which is true since $t-n\ge 1$ and $t\ge 7$. Thus the above condition (1) holds.

\smallskip
\underline{Claim 1:} {\it The elements $tn^2,\, tn^2+n,\, t^2n+1,\, t^2n+n+1,\, t^2n^2-t^2n-1$ are (the only) atoms of the monoid $T$ and $T$ is a numerical semigroup.}

\smallskip
\underline{Proof of Claim 1:} Clearly $tn^2$ and $tn^2+n$ are atoms. We now prove that $t^2n+1$ is an atom. Otherwise, 
\[t^2n+1=\alpha\, tn^2+\beta\,(tn^2+n)\]
for some $\alpha, \beta\in\N$, which is not possible modulo $n$. Next we prove that $t^2n+n+1$ is an atom. Suppose to the contrary. Then
\begin{equation}\label{eq_third_atom}
t^2n+n+1=\alpha\, tn^2+\beta\,(tn^2+n)+\gamma\,(t^2n+1).
\end{equation}
Here $\gamma=0$ is not possible modulo $n$. Hence $\gamma\ge 1$. 

\smallskip
\underline{1$^\circ$ case: Either $\alpha$ or $\beta$ is $\ne 0$.} Then $\alpha\,tn^2+\beta\,(tn^2+n)>n$, hence the RHS of the equality (\ref{eq_third_atom}) is bigger than the LHS, a contradiction.

\smallskip
\underline{2$^\circ$ case: $\alpha=\beta=0$.} Then (\ref{eq_third_atom}) gives $t^2n+n+1=\gamma\,(t^2n+1)$. If $\gamma\ge 2$, then the RHS of this equality is bigger than the LHS, a contradiction. Hence $\gamma=1$. Then we get $t^2n+n+1=t^2+1$, a contradiction.

\smallskip
So (\ref{eq_third_atom}) is not possible and thus $t^2n+n+1$ is an atom.

Next we show that $t^2n^2-t^2n-1$ is an atom. Suppose to the contrary. Then:
\begin{equation}\label{eq_fourth_atom}
t^2n^2=t^2n-1=\alpha\,tn^2+\beta\,(tn^2+n)+\gamma\,(t^2n+1)+\delta\,(t^2n+n+1).
\end{equation}
Here $\gamma=\delta=0$ is not possible modulo $n$. Hence either $\gamma\ge 1$ or $\delta\ge 1$. Also $\alpha=\beta=0$ is not possible because of the properties of the numerical semigroup $T'$ (see Proposition \ref{Sigma_two_elements}). Hence either $\alpha\ge 1$ or $\beta\ge 1$. Modulo $n$ we get from (\ref{eq_fourth_atom}) that $-1\equiv \gamma+\delta \pmod n$, i.e., $\gamma+\delta\equiv n-1 \pmod n$. Hence $\gamma+\delta\in\{n-1, 2n-1, 3n-1,\dots\}$. Hence we get from (\ref{eq_fourth_atom}):
\begin{align*}
\mathrm{RHS} &\ge (n-1)(t^2n+1)+tn^2\\
        &= t^2n^2-t^2n+n-1+tn^2\\
        &>t^2n^2-t^2n-1\\
        &=\mathrm{LHS},
\end{align*}
a contradiction. Thus $t^2n^2-t^2n-1$ is an atom.

\smallskip
\underline{Claim 1 is proved.}

\medskip
\underline{Claim 2:} {\it Let $x=kn=t^2n^2+n$. Then AS(x)=\{2,n,t\}.}

\smallskip
\underline{Proof of Claim 2:} We have
\begin{align*}
x &= \underline{k}\,n\\
   &= \underline{(t^2n^2-t^2n-1)} + \underline{(t^2n+n+1)}\\
   &= (t-1)\,\underline{tn^2}+1\cdot\underline{(tn^2+n)},
\end{align*}
where the underlined expressions are atoms, so that 
\[AN(x)\supseteq \{2,n,t\}.\]
Consider the equality
\begin{align}\label{eq_AS}
(t^2n+1)\,n=\alpha\,tn^2&+\beta\,(tn^2+n) + \gamma\,(t^2n+1)\notag\\
                                                                         &+\delta\,(t^2n+n+1)+\varepsilon\,(t^2n^2-t^2n-1)
\end{align}
with $\alpha, \beta,\gamma,\delta,\varepsilon\in\N$. If $\alpha=\beta=0$, then it follows from the properties of the numerical semigroup $T'$ (see Proposition \ref{Sigma_two_elements}) that either $\gamma=0$, $\delta=\varepsilon=1$, or $\delta=\varepsilon=0$, $\gamma=n$, which are two of the three decompositions listed above. Suppose now that either $\alpha\ne 0$ or $\beta\ne 0$. 

\smallskip
\underline{1$^\circ$ case: $\varepsilon\ne 0$.} Then $\varepsilon=1$ (otherwise, if $\varepsilon>1$, the RHS of (\ref{eq_AS}) would be bigger than the LHS). We have:
\begin{align*}
t^2n^2+n = \alpha\,tn^2&+\beta\,(tn^2+n) + \gamma\,(t^2n+1)\\
                                                                         &+\delta\,(t^2n+n+1)+ t^2n^2-t^2n-1,
\end{align*}
hence
\[t^2n+n+1=\alpha\,tn^2+\beta\,(tn^2+n) + \gamma\,(t^2n+1)+\delta\,(t^2n+n+1).\]
Hence $\delta=0$ (otherwise, as $\alpha\ne 0$ or $\beta\ne 0$, we would have that the RHS of this equality is bigger than the LHS). Now we have
\[t^2n+n+1=\alpha\,tn^2+\beta\,(tn^2+n) + \gamma\,(t^2n+1).\]
Hence $\gamma=1$ (otherwise, as $\alpha\ne 0$ or $\beta\ne 0$, we would have that the RHS of this equality is bigger than the LHS). So we have
\[t^2n+n+1=\alpha\,tn^2+\beta\,(tn^2+n) + t^2n+1,\]
hence $n=\alpha\,tn^2+\beta\,(tn^2+n)$, a contradiction.

\smallskip
\underline{2$^\circ$ case: $\varepsilon= 0$.} We have:
\[(t^2n+1)\,n = \alpha\,tn^2+\beta\,(tn^2+n) + \gamma\,(t^2n+1)+\delta\,(t^2n+n+1).\]

\smallskip
\underline{2A$^\circ$ case: $\gamma\ne 0$ or $\delta\ne 0$.} We have $0\equiv \gamma+\delta \pmod n$, hence $\gamma+\delta\in\{n, 2n, 3n, \dots\}$, hence (as $\alpha\ne 0$ or $\beta\ne 0$) the RHS of this equality is bigger than the LHS, a contradiction. 

\smallskip
\underline{2B$^\circ$ case: $\gamma=\delta=0$.} We have $(t^2n+1)\,n = \alpha\,tn^2+\beta\,(tn^2+n)$, hence $t^2n+1 = \alpha\,tn+\beta\,(tn+1)$. If $\alpha+\beta\ge t+1$, then we get from this equality
\[\mathrm{RHS} > (t+1)\,tn=t^2n+tn>\mathrm{LHS},\]
a contradiction. If $\alpha+\beta\le t-1$, then
\begin{align*}
\mathrm{RHS} &= \alpha\,tn^2+\beta\,(tn^2+n)\\
                        &<(\alpha+\beta)\,(tn^2+n)\\
                        &\le (t-1)\,(tn^2+n)\\
                        &=t^2n^2-tn^2+tn-n\\
                        &\le t^2n^2+n
\end{align*}
(as $t^2n^2=tn^2+tn-n\le t^2n^2+n \Leftrightarrow tn\le 2n+tn^2 \Leftrightarrow t\le tn+2$). Thus the RHS is smaller than the LHS, a contradiction. Finally, if $\alpha+\beta=t$, then $(t^2n+1)\,n = \alpha\,tn^2+(t-\alpha)\,(tn^2+n)$, hence $t^2n^2+n = \alpha\,tn^2+t^2n^2+tn-\alpha\,tn^2-\alpha$, hence $(1+\alpha)\,n=tn$. This implies $\alpha=t-1$ and 
$\beta=1$. This is the third one of the three decompositions listed at the beginning of the proof of this claim.

\smallskip
\underline{Claim 2 is proved.}

\smallskip
The claims 1 and 2 contain all the facts from the statement of the proposition.
\end{proof}

\begin{remark}\label{choice_of_k}
At the beginning of the proof of the previous proposition we define $k=t^2n+1$. Our first attempt was $k=tn+1$. The ``$+1$'' part of this formula for $k$ is put so that $k\equiv 1 \pmod n$, which later, after also putting $a\equiv 0 \pmod n$ and $b\equiv 0 \pmod n$, makes the proofs of atomicity and addendization in certain cases very simple. However, with $k=tn+1$ we wern't able to rule out the case $x=\lambda\,x+\mu\,b$ with $\lambda+\mu<t$. We then tried $k=ztn+1$, where $z$ is a paremeter, and this worked for various $z$, in particular for $z=t$.
\end{remark}

\begin{corollary}\label{Sigma_any_two_elt_set}
Consider the subset $\Sigma''=\{n,t\}$ of the set $\N\setminus \{0,1\}$, with $n\ge 3$ and $t\ge n+1$. The monoid 
\[T''=\langle tn^2,\, tn^2+n,\, t^2n+1\rangle\]
is a numerical semigroup with minimal generating set $A''=\{tn^2, \,tn^2+n,\, t^2n+1\}$, whose element $x=t^2n^2+n$ has the property $AS(x)=\Sigma''$. 
\end{corollary}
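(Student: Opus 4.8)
The plan is to obtain this as a direct corollary of Proposition \ref{Sigma_three_elements}, by observing that $T''$ is exactly the submonoid of the five-generator semigroup
\[T=\langle tn^2,\, tn^2+n,\, t^2n+1,\, t^2n+n+1,\, t^2n^2-t^2n-1\rangle\]
obtained by discarding the two largest generators $t^2n+n+1$ and $t^2n^2-t^2n-1$. The crucial point is that these two discarded generators are precisely the atoms appearing in the unique two-atom decomposition $x=(t^2n^2-t^2n-1)+(t^2n+n+1)$ exhibited in the proof of Claim 2, whereas the surviving decompositions $x=n\cdot(t^2n+1)$ and $x=(t-1)\cdot tn^2+(tn^2+n)$ use only generators lying in $A''$. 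So I expect that deleting these two generators kills exactly the value $2$ from the addendization set and leaves $AS(x)=\{n,t\}$.

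First I would check that $T''$ is a numerical semigroup. Since $\gcd(tn^2,tn^2+n)=n$ and $\gcd(n,t^2n+1)=1$, we get $\gcd(tn^2,tn^2+n,t^2n+1)=1$, so $\langle A''\rangle$ has finite complement in $\N$. Next I would verify that $A''$ is the minimal generating set, i.e.\ that each of $tn^2$, $tn^2+n$, $t^2n+1$ is an atom of $T''$. The first two are clearly atoms, and for $t^2n+1$ the argument is identical to the one in Claim 1: a nontrivial decomposition would force $t^2n+1=\alpha\,tn^2+\beta\,(tn^2+n)$ for some $\alpha,\beta\in\N$, which is impossible modulo $n$. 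This atomicity proof uses only the two smaller generators, so it transfers verbatim; more generally, since $T''\subseteq T$, any element that is an atom of $T$ and lies in $T''$ stays an atom of $T''$, because a nontrivial $T''$-decomposition would also be a nontrivial $T$-decomposition.

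Finally I would compute $AS(x)$ for $x=t^2n^2+n$. A decomposition of $x$ into atoms of $T''$ is precisely a solution in $\N$ of equation (\ref{eq_AS}) subject to the extra constraint $\delta=\varepsilon=0$: since $T''\subseteq T$ no $T''$-decomposition can use a generator outside $A''$, and passing to the subsemigroup introduces no new decompositions. Running the case analysis of Claim 2 under $\delta=\varepsilon=0$: if $\alpha=\beta=0$, the properties of $T'$ leave only $\gamma=n$, giving the $n$-atom decomposition; while if $\alpha\ne0$ or $\beta\ne0$, then the case $\varepsilon\ne0$ is excluded, subcase 2A is impossible, and subcase 2B yields $\alpha=t-1$, $\beta=1$, the $t$-atom decomposition. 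Hence $AS(x)=\{n,t\}$. I do not anticipate a genuine obstacle: passing to a subsemigroup can only suppress decompositions, never create them, so the only things to confirm are that the two surviving decompositions avoid the discarded generators (they do) and that none of the three retained elements loses its atom status (it cannot, as argued above).
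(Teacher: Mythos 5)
Your proposal is correct and follows essentially the same route as the paper: both deduce the corollary from Proposition \ref{Sigma_three_elements} by viewing $T''$ as the submonoid of $T$ generated by the three smallest atoms, checking $\gcd(A'')=1$, noting that atoms of $T$ lying in $T''$ remain atoms of $T''$, and observing that of the three decompositions of $x$ in $T$ exactly the two avoiding $t^2n+n+1$ and $t^2n^2-t^2n-1$ survive in $T''$. Your version merely spells out more explicitly (via the constraint $\delta=\varepsilon=0$ in equation (\ref{eq_AS})) why no further decompositions can appear, which the paper leaves implicit.
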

\begin{proof}
The monoid $T''$ is a submonoid of the monoid $T$ from the previous proposition. For any $d\in\N^\ast$, if $d\mid tn^2$ and $d\mid tn^2+n$, then $d\mid n$, hence $d\nmid t^2n+1$. So the gcd of the three generators is $1$. It follows from the previous proposition that $T''$ is a numerical semigroup with the minimal generating set $A''=\{tn^2, \,tn^2+n,\, t^2n+1\}$. By the previous proposition the element $x=t^2n^2+n$ can be decomposed into atoms in three different ways in $T$, however only two of them work in $T''$ as well. Hence $AS(x)=\Sigma''$.
\end{proof}

\begin{corollary}\label{Sigma_any_three_elt_set}
Consider the subset $\Sigma'''=\{r+1,\,n,\,t\}$ of the set $\N\setminus \{0,1\}$, with $r\ge 1$, $n\ge r+2$ and $t\ge n+1$. The monoid 
\[T'''=\langle rtn^2,\, r(tn^2+n),\, r(t^2n+1),\, r(t^2n+n+1),\, t^2n^2-t^2n-1\rangle\]
is a numerical semigroup with minimal generating set $A'''=\{rtn^2, \,r(tn^2+n),\, r(t^2n+1),\, r(t^2n+n+1), \,t^2n^2-t^2n-1\}$, whose element $x'''=r(t^2n^2+n)$ has the property $AS(x''')=\Sigma'''$. 
\end{corollary}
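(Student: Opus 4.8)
The plan is to derive this corollary directly from Proposition \ref{Sigma_three_elements}, exploiting the fact that the first four generators of $T'''$ are exactly $r$ times the first four generators of the semigroup $T$ of that proposition, while the fifth generator $t^2n^2-t^2n-1$ and the target $x'''=r(t^2n^2+n)=rx$ carry the scaling in a different way. Writing $x=t^2n^2+n$ for the element of Proposition \ref{Sigma_three_elements}, the three atomic decompositions found there ($x=(t^2n^2-t^2n-1)+(t^2n+n+1)$, $x=n(t^2n+1)$, and $x=(t-1)tn^2+(tn^2+n)$) should lift to
\begin{align*}
x''' &= r\,\underline{(t^2n^2-t^2n-1)}+\underline{r(t^2n+n+1)},\\
     &= n\,\underline{r(t^2n+1)},\\
     &= (t-1)\,\underline{rtn^2}+\underline{r(tn^2+n)},
\end{align*}
with underlined atoms. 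The crucial point is that the single \emph{unscaled} atom $t^2n^2-t^2n-1$, used once in the first decomposition of $x$, must be used $r$ times in $T'''$ to contribute the same total value $r(t^2n^2-t^2n-1)$; this turns the $2$-addend decomposition into an $(r+1)$-addend one, while the other two retain their addend counts $n$ and $t$.

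First I would establish that $A'''$ is the minimal generating set and that $T'''$ is a numerical semigroup. Minimality means each listed element is an atom, and I would rerun the case analysis of Claim~1 of Proposition \ref{Sigma_three_elements}: for the four scaled generators the congruence and size arguments go through after cancelling the common factor $r$, and $t^2n^2-t^2n-1$ stays an atom because any relation expressing it via the others would, reduced modulo $r$ and then modulo $n$, reproduce a relation already excluded there. For the numerical-semigroup property I would compute $\gcd(A''')$: since $\gcd(tn^2,\,tn^2+n,\,t^2n+1,\,t^2n+n+1)=1$, we get $\gcd(rtn^2,\,r(tn^2+n),\,r(t^2n+1),\,r(t^2n+n+1))=r$, whence $\gcd(A''')=\gcd\!\left(r,\;t^2n^2-t^2n-1\right)$, which must then be shown to equal $1$.

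For $AS(x''')$ the key device is reduction modulo $r$. Given any atomic decomposition
\[x'''=\alpha\,rtn^2+\beta\,r(tn^2+n)+\gamma\,r(t^2n+1)+\delta\,r(t^2n+n+1)+\varepsilon\,(t^2n^2-t^2n-1),\]
reducing modulo $r$ kills every term except $\varepsilon(t^2n^2-t^2n-1)$ and the left side, so $\varepsilon(t^2n^2-t^2n-1)\equiv 0\pmod r$; coprimality of $r$ and $t^2n^2-t^2n-1$ then forces $r\mid\varepsilon$, say $\varepsilon=r\varepsilon'$. Dividing the relation by $r$ yields an atomic decomposition of $x$ in $T$, so by Claim~2 of Proposition \ref{Sigma_three_elements} the only possibilities are $(\delta,\varepsilon')=(1,1)$, or $\gamma=n$, or $(\alpha,\beta)=(t-1,1)$, with all other coefficients zero. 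Since the $T'''$-addend count equals the $T$-addend count increased by $(r-1)\varepsilon'$, these give $2+(r-1)=r+1$, $n$, and $t$ respectively, so $AS(x''')=\{r+1,n,t\}=\Sigma'''$. The hypotheses $r\ge1$ and $n\ge r+2$ guarantee $2\le r+1<n<t$, so the three values are distinct.

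The step I expect to be the main obstacle is exactly the coprimality $\gcd\!\left(r,\;t^2n^2-t^2n-1\right)=1$, on which both the numerical-semigroup property and the divisibility $r\mid\varepsilon$ depend. It is not clearly forced by $n\ge r+2$ alone, so pinning down the hypothesis that guarantees it (or verifying it under the stated constraints) is where the real difficulty lies. A secondary point needing care is the atomicity of the unscaled generator $t^2n^2-t^2n-1$, since the simultaneous presence of scaled and unscaled generators means the clean modulo-$n$ arguments of Proposition \ref{Sigma_three_elements} must be combined with the modulo-$r$ reduction.
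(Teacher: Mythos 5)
Your route is the same as the paper's: lift the three decompositions of $x=t^2n^2+n$ from Proposition \ref{Sigma_three_elements}, observe that the unscaled atom $t^2n^2-t^2n-1$ must be used $r$ times so that the two-addend decomposition becomes an $(r+1)$-addend one, and classify all decompositions of $x'''$ by reducing modulo $r$ and descending to $T$. The atomicity arguments also match; the paper handles the unscaled generator even more quickly than you propose, by rewriting a putative relation $e=\alpha A+\beta B+\gamma C+\delta D$ as $e=(\alpha r)a+(\beta r)b+(\gamma r)c+(\delta r)d$ and invoking the atomicity of $e$ in $T$.

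The coprimality issue you single out is not a loose end you failed to tidy up; it is a genuine gap, and it is present, unacknowledged, in the paper's own proof. The paper's case $\varepsilon=\varepsilon' r+r'$ with $1\le r'\le r-1$ is dismissed ``modulo $r$'' on the grounds that only the term $r'(t^2n^2-t^2n-1)$ fails to be divisible by $r$, and the claim that $T'''$ is a numerical semigroup requires $\gcd\bigl(r,\;t^2n^2-t^2n-1\bigr)=1$, since, as you compute, the gcd of the four scaled generators equals $r$. Neither holds under the stated hypotheses alone: take $r=5$, $n=8$, $t=9$ (so $n\ge r+2$ and $t\ge n+1$); then $t^2n^2-t^2n-1=4535=5\cdot 907$, every generator of $T'''$ is divisible by $5$, and $T'''$ is not a numerical semigroup at all. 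So the corollary as stated is false, the missing hypothesis $\gcd\bigl(r,\;t^2n^2-t^2n-1\bigr)=1$ must be added (it holds automatically when $r=1$, recovering Proposition \ref{Sigma_three_elements}), and the gap propagates to the case $|\Sigma|=3$ of Theorem \ref{main_theorem}, e.g.\ for $\Sigma=\{6,8,9\}$. With that hypothesis in place your argument closes completely and coincides with the paper's.
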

\begin{proof}
Denote $a=tn^2,\,b=tn^2+n,\,c=t^2n+1,\, d=t^2n+n+1,\, e=t^2n^2-t^2n-1$ and $A=rtn^2,\, B=r(tn^2+n),\,C=r(t^2n+1),\, D=r(t^2n+n+1)$. We also use the notation $x=t^2n^2+n$ from the previous proposition. Taking into account the previous proposition, in order to prove that $A<B<C<D<e$ we only need to prove that $D<e$. We have:
\begin{align*}
e=t^2n^2-t^2n-1 &\ge t^2n(r+2)-t^2n-1\\
                               &= t^2n(r+1)-1\\
                               &= rt^2n+t^2n-1\\
                               &= rt^2n+(r+3)^2n-1\\
                               &= rt^2n+r^2n+6rn+9n-1\\
                               &> rt^2n+rn+r\\
                               &= D.
\end{align*}
Taking into account the previous proposition, in order to prove that $A, B, C, D, e$ are atoms of $T'''$, we only need to prove that $e$ is an atom of $T'''$. Suppose to the contrary. Then
\begin{align*}
e&= \alpha\,A+\beta\,B+\gamma\,C+\delta\,D\\
  &= (\alpha\,r)a+(\beta\,r)b+(\gamma\,r)c+(\delta\,r)d,
\end{align*}
contradicting the fact that $e$ is an atom of the numerical semigroup $T=\langle a,b,c,d,e\rangle$ from the previous proposition.  

\medskip
There are three decompositions of $x$ into atoms in $T$. Here are the three analogous decompositions of $x'''$ in $T'''$:
\begin{align*}
x''' &= n\,\underline{C},\\
     &= (t-1)\,\underline{A}+\underline{B},\\
     &= \underline{D}+r\underline{e},
\end{align*}
where the underlined symbols are atoms of $T'''$. Hence
\[AS(x''')\supseteq\{r+1,\, t,\,n\}.\]
We now show that there is no other way to decompose $x'''$ into atoms in $T'''$. Consider the relation
\begin{equation}\label{eq_decompositions}
x'''=\alpha\,A+\beta\,B+\gamma\,C+\delta\,D+\varepsilon\,e.
\end{equation}

\smallskip
\underline{1$^\circ$ case: $\varepsilon=0$.} Then (\ref{eq_decompositions}) becomes $x'''=\alpha\,A+\beta\,B+\gamma\,C+\delta\,D$, which is (after cancelling $r$) equivalent to  $x=\alpha\,a+\beta\,b+\gamma\,c+\delta\,d$ in $T$. By the previous proposition we have exactly two options: 
\begin{align*}
x&=n\,\underline{c},\\
  &=(t-1)\,\underline{a}+\underline{b},
\end{align*}
where the underlined symbols are atoms of $T$. Thus we got the first two of the above listed three options for a decoomposition of $x'''$ into atoms in $T'''$.

\smallskip
\underline{2$^\circ$ case: $\varepsilon=\varepsilon'\,r$, where $\varepsilon'\ge 1$.} Then after cancelling $r$, the relation (\ref{eq_decompositions}) is equivalent to  $x=\alpha\,a+\beta\,b+\gamma\,c+\delta\,d+\varepsilon'\,e$ in $T$, with 
$\varepsilon'\ge 1$. By the previous proposition we have $\alpha=\beta=\gamma=0$, $\delta=1$, $\varepsilon'=1$. Hence $\varepsilon=r$. Thus we got the above listed third decomposition of $x'''$ into atoms in $T'''$.

\smallskip
\underline{3$^\circ$ case: $\varepsilon=\varepsilon'\,r+r'$, where $r'\in\{1,2,\dots, r-1\}$.} Then $rx=r\alpha\,a+r\beta\,b+r\gamma\,c+r\delta\,d+r\varepsilon'\,e+r'\,(t^2n^2-t^2n-1)$ in $T$. This is not possible modulo $r$ as all the terms of this equality, but the last one, are divisible by $r$.
\end{proof}

Finally we formulate a theorem which unifies all of the above results.

\begin{theorem}\label{main_theorem}
Given a subset $\Sigma$ of the set $\N\setminus \{0,1\}$, such that $|\Sigma|\le 3$, there exists a numerical semigroup $T$ and an element $x\in T$ such that $AS(x)=\Sigma$.
\end{theorem}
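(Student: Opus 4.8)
The plan is to prove the theorem by a short case analysis on $|\Sigma|$, in each case invoking one of the results established above after a suitable choice of parameters, and verifying that the corresponding hypotheses are met by the given $\Sigma$. Since no nonzero nonunit element of a numerical semigroup can have an empty addendization set, I read $|\Sigma|\le 3$ as $1\le |\Sigma|\le 3$, as in the abstract. The singleton case is immediate: if $\Sigma=\{m\}$ with $m\ge 2$, then Proposition \ref{Sigma_singleton} directly produces the required $T$ and $x$.

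For the two-element case, write $\Sigma=\{p,q\}$ with $2\le p<q$, and split according to whether $p=2$ or $p\ge 3$. If $p=2$ then $q\ge 3$, and Proposition \ref{Sigma_two_elements} applies with $n=q$. If $p\ge 3$, then $q>p$ forces $q\ge p+1$, so Corollary \ref{Sigma_any_two_elt_set} applies with $n=p$ and $t=q$; its hypotheses $n\ge 3$ and $t\ge n+1$ are precisely the inequalities just recorded. This case split is genuine, since the two-element corollary is stated only for $n\ge 3$, whereas the smaller element of $\Sigma$ may equal $2$.

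For the three-element case, write $\Sigma=\{a,b,c\}$ with $2\le a<b<c$. The key observation is that Corollary \ref{Sigma_any_three_elt_set} already covers every such set uniformly, including those whose smallest element is $2$, which arise when the parameter $r$ equals $1$. I would apply it with $r=a-1$, $n=b$, and $t=c$; then $\Sigma'''=\{r+1,n,t\}=\{a,b,c\}=\Sigma$, and the hypotheses $r\ge 1$, $n\ge r+2$, and $t\ge n+1$ translate respectively into $a\ge 2$, $b\ge a+1$, and $c\ge b+1$, all immediate from $2\le a<b<c$.

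The main point is that there is no real obstacle left: the construction of the semigroups, the verification that the listed generators are exactly the atoms, and the computation of the addendization sets have all been carried out in the preceding propositions and corollaries. What remains is only to confirm that an arbitrary $\Sigma$ with $1\le |\Sigma|\le 3$ can always be cast into the parametrized form required by one of those results, and that the strict inequalities among the elements of $\Sigma$ supply exactly the parameter inequalities demanded. Because distinct elements of $\Sigma$ differ by at least one integer, this casting is always available, and the theorem follows.
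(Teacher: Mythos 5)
Your proof is correct and follows essentially the same route as the paper: a case split on $|\Sigma|$, invoking Proposition \ref{Sigma_singleton} for singletons, Proposition \ref{Sigma_two_elements} and Corollary \ref{Sigma_any_two_elt_set} for two-element sets, and Corollary \ref{Sigma_any_three_elt_set} for three-element sets. Your explicit verification of the parameter choices (e.g.\ $r=a-1$, $n=b$, $t=c$ in the three-element case) is a welcome extra level of care that the paper leaves implicit.
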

\begin{proof}
If $|\Sigma|=1$ the statement follows from Proposition \ref{Sigma_singleton}. If $|\Sigma|=2$ the statement follows from Proposition \ref{Sigma_two_elements} and Corollary \ref{Sigma_any_two_elt_set}.  If $|\Sigma|=3$ the statement follows from Corollary \ref{Sigma_any_three_elt_set}. 
\end{proof}

\begin{examples}\label{examples}
(1) By Proposition \ref{Sigma_two_elements}, the addendization set $\Sigma=\{2,3\}$ can be realized in the numerical semigroup $T=\langle 7,10,11\rangle$ as $AS(21)$. We have:
\begin{align*}
21&=\underline{10}+\underline{11}\\
    &=3\cdot\underline{7}.
\end{align*} 

(2) By Corollary \ref{Sigma_any_two_elt_set}, the addendization set $\Sigma=\{3,4\}$ can be realized in the numerical semigroup $T=\langle 36,39,49\rangle$ as $AS(147)$. We have:
\begin{align*}
147&=3\cdot \underline{49}\\
    &=3\cdot\underline{36}+\underline{39}.
\end{align*} 

(2) By Corollary \ref{Sigma_any_two_elt_set}, the addendization set $\Sigma=\{3,4\}$ can be realized in the numerical semigroup $T=\langle 36,39,49\rangle$ as $AS(147)$. We have:
\begin{align*}
147&=3\cdot \underline{49}\\
      &=3\cdot\underline{36}+\underline{39}.
\end{align*} 

(3) By Corollary \ref{Sigma_three_elements}, the addendization set $\Sigma=\{2,3,4\}$ can be realized in the numerical semigroup $T=\langle 36,39,49,52,95\rangle$ as $AS(147)$. We have:
\begin{align*}
147&=\underline{52}+\underline{95}\\
      &=3\cdot \underline{49}\\
      &=3\cdot\underline{36}+\underline{39}.
\end{align*} 

(4) By Corollary \ref{Sigma_any_three_elt_set}, the addendization set $\Sigma=\{3,5,7\}$ can be realized in the numerical semigroup $T=\langle 350, 360,492,502,979\rangle$ as $AS(2460)$. We have:
\begin{align*}
2460&=5\cdot \underline{492}\\     
        &=6\cdot\underline{350}+\underline{360}\\
        &=\underline{502}+2\cdot \underline{979}.
\end{align*} 

Our examples are probably not minimal in any sense, however they provide formulas for constructing $T$ and $x$ that work for any given $\Sigma$ with $|\Sigma|\le 3$.
\end{examples}

\section{Concluding remarks}\label{concluding_remarks}
 We intuitively feel that Theorem \ref{main_theorem} holds as well if we omit the condition $|\Sigma|\le 3$, i.e., that the following conjecture holds:

\medskip
{\bf Conjecture 1.} {\it Given any nonempty finite subset $\Sigma$ of the set $\N\setminus \{0,1\}$, there exists a numerical semigroup $T$ and an element $x\in T$ such that $AS(x)=\Sigma$.}

\medskip
We cannot imagine any obstacle that would prevent this stement to be true, however, we are not able to construct such a $T$ and $x\in T$. 

\medskip
We mentioned that our examples are probably not minimal in any sense, so it would be interesting to ask, when $\Sigma$ is given, how to construct minimal $T$ and $x$ in various senses. For example, for $\Sigma=\{2,3\}$ a minimal $T$ and $x$, in a certain precisely defined sense, would be $T=\langle 2,3\rangle$ and $x=6$. Also for $\Sigma=\langle 2,3,4\rangle$ a minimal $T$ and $x$, in a certain precisely defined sense, would probably be $T=\langle 9,12,13,23\rangle$ and $x=36$.

\medskip
Furthermore, we intuitively feel that the following two equivalent conjectures (which are beyond the scope of numerical semigroups) hold:

\medskip
{\bf Conjecture 2.} {\it Given any nonempty subset $\Sigma$, finite or infinite, of the set $(\N\cup\{\infty\})\setminus \{0,1\}$, there exists an additive monoid $M$ and an element $x\in M$ such that $AS(x)=\Sigma$.}

\medskip
{\bf Conjecture 3.} {\it There exists an additive monoid $M$ such that for any nonempty subset $\Sigma$, finite or infinite, of the set $(\N\cup\{\infty\})\setminus \{0,1\}$, there exists an element $x=x(\Sigma)\in M$ such that $AS(x)=\Sigma$.}

\medskip
Note that the statement $\infty\in AS(x)$ for some $x\in M$ means that $x$ has an infinite addendization process.

Clearly Conjecture 3 implies Conjecture 2. Conversely, if Conjecture 2 holds, then using a direct sum of monoids one can show that Conjecture 3 holds.

\end{document}